 \newtheorem{thm}{Theorem}
 \newtheorem{lem}[thm]{Lemma}
 \newtheorem{defn}{Definition}
 \newtheorem{exmp}{Example}
 \newtheorem{rem}{Remark}
 \newtheorem{assm}{Assumption}
\newcommand{\bb}[0]{\begin{bmatrix}}
\newcommand{\eb}[0]{\end{bmatrix}}
\newcommand{\be}[0]{\begin{equation}}
\newcommand{\ee}[0]{\end{equation}}
\newcommand{\ben}[0]{\begin{equation*}}
\newcommand{\een}[0]{\end{equation*}}
\newcommand{\norms}[1]{\lVert#1\rVert}
\let\norm=\normB
\newcommand{\norml}[2]{\norm{#1}_{L_{#2}}}
\newcommand{\abs}[1]{\lvert#1\rvert}
\renewcommand{\Re}[0]{\mathbb R}
\DeclareFontFamily{U}{mathx}{\hyphenchar\font45}
\DeclareFontShape{U}{mathx}{m}{n}{<-> mathx10}{}
\DeclareSymbolFont{mathx}{U}{mathx}{m}{n}
\DeclareMathAccent{\widebar}{0}{mathx}{"73}
\begin{document}

\title{Closed--loop Reference Models for Output--Feedback Adaptive Systems}

\author{Travis~E.~Gibson, Anuradha~M.~Annaswamy,  and Eugene~Lavretsky,
\thanks{T.~E. Gibson and A.~M. Annaswamy are with the Department
of Mechanical Engineering, Massaschusetts Institute of Technology, Cambridge,
MA, 02139 USA e-mail: ({tgibson@mit.edu}).}
\thanks{E. Lavretsky is with The Boeing Company, Huntington Beach, CA 92648 USA.}
}

\maketitle

\begin{abstract}
Closed--loop reference models have recently been proposed for states accessible adaptive systems. They have been shown to have improved transient response over their open loop counter parts. The results in the states accessible case are extended to single input single output plants of arbitrary relative degree.\end{abstract}

\section{Introduction}

Recently a class of adaptive controllers with {\em Closed--loop Reference Models} (CRM) for states accessible control has been proposed \cite{lav10aiaa,gib13tranA,gib13acc1,gib13acc2}.  The main feature of this class is the inclusion of a Luenberger gain which feeds back the tracking error into the reference model. Without the Luenberger gain the CRM reduces to the {\em Open--loop Reference Model} (ORM) which is used in classical adaptive control \cite{annbook,ioabook}. Reference \cite{lav10aiaa} introduces the concept of the CRM. In references \cite{gib13tranA,gib13acc1,gib13acc2}  the stability and robustness properties of the CRM based adaptive system, and more importantly, an improved transient response were established for the case when state variables are accessible. The transient response was quantified through the use of $\mathcal L_2$ norms of the model following error as well as the rate of control input. In \cite{gib13tranA,gib13acc1,gib13acc2}, it was shown that the extra design freedom in the adaptive system in the form of the Luenberger gain allowed this improvement. Others recent works on states accessible CRM adaptive control can be found in \cite{ste10,ste11}.
 
This paper addresses the next step in the design of adaptive systems, which is the case when only outputs are available for measurement rather than the entire state. It is shown that even with output feedback, the resulting CRM--based adaptive systems are first and foremost stable, and exhibit an improved transient response. As in the case when states are accessible, it is shown that this improvement is possible due to the suitable choice of the Luenberger gain. Unlike the approach in \cite{lav12tac}, the classical model reference adaptive control structure is used here. Also, our focus here is only on single-input single-output systems. 

Using CRMs has two advantages over ORMs: 1) The reference model need not be {\em Strictly Positive Real} (SPR) for CRM systems, and need only have the same number of poles and zeros as its ORM counter part; 2) In CRM systems the reference model, filters and Luenberger gain can be chosen so that the error transfer function used in the update law is SPR and has arbitrarily fast poles and zeros. While  the stability and performance bounds are given for arbitrary reference models, we show in Examples 1 and 2 how one can explicitly obtain error transfer functions of the form
\be\label{intro}
k\frac{s^{m-1}+b_1 s^{m-2} + \cdots b_{m-1}}{s^m+a_1 s^{m-1} + \cdots+ a_{m}}  \triangleq k \mathcal W^\prime(s)
\ee
where $m$ is the relative degree of the plant to be controlled, $s$ is the differential operator, $k$ is the high--frequency gain which is unknown but with known sign, and the $a_i,b_i$ are free to choose so long as $\mathcal W^\prime (s)$ is SPR. 

Another contribution of this work comes by way of the performance analysis technique used. When studying the stability of output feedback adaptive systems non--minimal state space representations of the model following error are constructed so that it can be shown that {\em all} signals in the system are bounded. After stability is obtained, the performance analysis comes by way of studying the behavior of a minimal representation of the adaptive system. The analysis is no longer hindered by the unknown  eigenvalues of the non--observable states in the error equation. It is precisely this technique that allows us to extend the results of transient response analysis from the states accessible case to the output feedback case, where we will show that we have complete control over the location of the eigenvalues of the minimal system.

This paper is organized as follows. Section II contains the notation. In Section III the control problem is defined. Section IV contains the analysis of the ORM (classical) relative degree 1 case. Section V and VI contain the analysis of the CRM relative degree 1 and 2 cases respectively. Section VII analysis the arbitrary relative degree case, and Section VIII closes with our conclusions.

\section{Notation} All norms unless otherwise stated are the Euclidean norm and enduced Euclidean norm. Let $\mathcal P\mathcal C_{[0,\infty)}$ denote the set of all bounded piecewiese continuous signal.
\begin{defn}
Let ${x,y\in \mathcal P\mathcal C_{[0,\infty)}}$. The big {O--notation}, ${y(t)=O[x(t)]}$ is equivalent to the existence of constants ${M_1, M_2>0}$ and ${t_0 \in \Re^+}$ such that 
${\abs{y(t)}\leq M_1 \abs{x(t)} + M_1}\ \forall t\geq t_0.$
\end{defn}

\begin{defn}
Let $x,y\in \mathcal P\mathcal C_{[0,\infty)}$. The small o--notaion, $y(t)=o[x(t)]$ is equivalent to the existence of constants $\beta(t)\in \mathcal P\mathcal C_{[0,\infty)}$ and $t_0 \in \Re^+$ such that ${\abs{y(t)} =\beta(t) x(t)} \ \forall t\geq t_0$ and $\lim_{t\to\infty}\beta(t)=0$.
\end{defn}
\begin{defn}
Let $x,y\in \mathcal P\mathcal C_{[0,\infty)}$. If $y(t)=O[x(t)]$  and $x(t)=O[y(t)]$. Then $x$ and $y$ are said to be equivalent and denoted as $ x(t)\sim y(t) $.
\end{defn}
\begin{defn}
Let $x,y\in \mathcal P\mathcal C_{[0,\infty)}$. $x$  and $y$ are said to grow at the same rate if
$\sup_{t\leq \tau} \abs{x(\tau)} \sim \sup_{t\leq \tau} \abs{y(\tau)}$.\end{defn}
\begin{defn}\label{def:5}
The prime notation is an operator that removes the high frequency gain from a transfer function \ben
\mathcal W(s) \triangleq k\frac{s^{m-1}+b_1 s^{m-2} + \cdots b_{m-1}}{s^m+a_1 s^{m-1} + \cdots+ a_{m}}. \een
so that 
\ben
\mathcal W^\prime(s) \triangleq \frac{\mathcal W(s)} k,
\een
Just as was done in \eqref{intro}.
\end{defn}

\section{The Control Problem}
 Consider the {\em Single Input Single Output} (SISO) system of equations
\be\label{iop}
y(t)=W(s)u(t)
\ee
where $u\in\Re$ is the input, $y\in\Re$ is the measurable output, and $s$ the differential operator. The transfer function of the plant is parameterized as 
\be\label{eq:w}
W(s)\triangleq k_p\frac{Z(s)}{P(s)} \triangleq k_p W^\prime(s)
\ee
where $k_p$ is a scalar, and $Z(s)$ and $P(s)$ are monic polynomials with ${\text{deg}(Z(s))<\text{deg}(P(s))}$. The following assumptions will be made throughout.

\begin{assm}\label{ass1}
$W(s)$ is minimum phase.
\end{assm}
\begin{assm}\label{ass2}
The sign of $k_p$ is known.
\end{assm}
\begin{assm}\label{ass3}
The relative degree of $W(s)$ is known.
\end{assm}

\section{Classical $n^*=1$ case (ORM $n^*=1$)}

The goal is to design a control input $u$ so that the output $y$ in \eqref{iop} tracks the output $y_m$ of the reference system
\be\label{eq:wm}
y_m(t)=W_m(s)r(t)\triangleq k_m\frac{Z_m(s)}{P_m(s)}r(t)
\ee
where $k_m$ is a scalar and $Z_m(s)$ and $P_m(s)$ are monic polynomials with $W_m(s)$ relative degree 1. Just as before we use the prime notation from Definition \ref{def:5} 
\be
k_m W_m^\prime(s) = W_m(s).
\ee 
\begin{assm}\label{assm4}
$W^\prime_m(s)$ is {\em Strictly Positive Real} (SPR).
\end{assm}
The previous assumption can be relaxed by using pre--filters in the adaptive law, similar to what will be done in the relative degree 2 controller. This increased generalization though is not necessary for our discussion.

The structure of the adaptive controller is now presented:
\begin{align}
\dot \omega_1(t)&=\Lambda \omega_1+b_\lambda u(t) \label{eq:w1}\\
\dot \omega_2(t)&=\Lambda \omega_2+b_\lambda y(t) \label{eq:w2}\\
\omega(t) &\triangleq [r(t),\ \omega_1^T(t),\ y(t),\ \omega_2^T(t) ]^T\\
\theta(t) &\triangleq [ k(t),\ \theta_1^T(t),\ \theta_0(t),\ \theta_2^T(t)  ]^T\\
u&=\theta^T(t)\omega\label{u1}
\end{align}
where $\Lambda \in\Re^{(n-1)\times(n-1)}$ is Hurwitzx, $b_\lambda \in\Re^{n-1}$, $\hat k\in \Re$, $\omega_1,\omega_2\in\Re^{n-1}$, and $\theta\in\Re^{2n}$ is adaptive gain vector with $ k(t)\in\Re$, $\theta_1(t)\in\Re^{n-1}$, $\theta_2(t)\in\Re^{n-1}$ and $\theta_0(t) \in\Re$. The update law for the adaptive parameter is then defined as
\be\label{t1}
\dot \theta(t)=-\gamma\text{sign}(k_p)e_y\omega,
\ee
where $e_y=y-y_m$.

Before stability is proved, a discussion on parameter matching is needed. Let 
$
\theta_c \triangleq [ k_c,\ \theta_{1c}^T,\ \theta_{0c},\ \theta_{2c}^T  ]^T
$
be a constant vector. When $\theta(t)=\theta_c$ the forward loop and feedback loop take the form
\ben
\frac{\lambda(s)}{\lambda(s)-C(\theta_c;s)}\ \text{ and }\ \frac{D(\theta_c;s)}{\lambda(s)}.
\een
For simplicity we choose $\lambda(s)=Z_m(s)$, but note that this is not necessary and the stability of the adaptive system will still hold. The closed loop system is now of the form
\ben
y(t)=W_{cl}(\theta_c; s) r(t)
\een
with\ben
W_{cl}(\theta_c; s)\triangleq  \frac{k_c k_pZ(s)Z_m(s)}{(Z_m(s)-C(\theta_c;s))P(s)- k_p Z(s) D(\theta_c;s)}.
\een
From the Bezout Identity, a  $\theta^{*T}\triangleq [ k^*,\ \theta_{1}^{*T},\ \theta_{0}^*,\ \theta_{2}^{*T}  ]^T$ exists such that
$
{W_{cl}(\theta^*; s)  = W_m(s)}.
$

Therefore, 
\be\label{yptf1}
y(t)=k_p W^\prime_m(s) (\phi^T(t)\omega(t)+k^* r(t))
\ee
and
\be\label{e1}
e_y(t) = k_p W^\prime_m(s) \phi(t) \omega(t),
\ee
where $\phi(t)=\theta(t)-\theta^*(t)$ and $k^*=k_m/k_p$.

\subsection{Stability for $n^*=1$}
The plant in \eqref{eq:w} can be represented by the unknown quadruple, ${(A_p,b_p,c_p,k_p)}$ 
\be\label{eq:wp}
\dot x= A_{p} x+ b_{p} u ; \quad y = k_p c^T_{p}x
\ee
where 
\ben
k_pc_{p}^T (sI- A_{p}) b_{p} = W(s).
\een
In general one does not need to keep the high frequency gain as a separate variable when writing the transfer function dynamics in state space form. In the context of adaptive control however, the sign of $k_p$ is important in proving stability and is therefore always singled out from the rest of the dynamics. Using \eqref{eq:wp}, the dynamics in \eqref{yptf1} can be represented as
\be\label{eq:x}
\dot x = A_{mn} x + b_{mn} (\phi^T(t)\omega + k^* r); \quad y = k_p c^T_{mn}x
\ee
where
\ben\begin{split}
A_{mn} &= \bb A_p + b_p \theta_0^* k_pc_p^T & b_p \theta_1^{*T} & b_p \theta_2^{*T}  \\ b_\lambda \theta_0^* k_pc_p^T & \Lambda+b_\lambda\theta_1^{*T}\ & b_\lambda \theta_2^{*T}\\ b_\lambda k_pc_p^T& 0 & \Lambda\eb \\
b_{mn} &= \bb b_p \\ b_\lambda\\0 \eb, \quad c_{mn} =\bb c_p \\ 0 \\ 0\eb\text{ and  } \ x\triangleq\bb x_p \\ \omega_1 \\ \omega_2\eb
\end{split}\een
with the reference model having an equivalent non--minimal representation
\ben
\dot x_{mn} = A_{mn} x_{mn} + b_{mn} k^* r; \quad y_m = k_p c^T_{mn}x_{mn}
\een
with the property that
\ben
k_pc_{mn}^T (sI- A_{mn}) b_{mn} =k_p W^\prime _m(s).
\een
The non--minimal error vector is defined as ${e_{mn}=x-x_{mn}}$ and satisfies the following dynamics
\be
\dot e_{mn} = A_{mn} e_{mn} + b_{mn} \phi^T \omega ; \quad  e_y = k_pc^T_{mn}e_{mn}.
\ee

\begin{thm} Following Assumptions \ref{ass1}-\ref{assm4}, the plant in \eqref{iop} with the reference model in \eqref{eq:wm}, controller in \eqref{u1} and the update law in \eqref{t1} are globally stable with the model following error asymptotically converging to zero.\end{thm}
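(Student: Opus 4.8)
The plan is to prove stability by a Lyapunov argument built on the non--minimal error dynamics, and then to upgrade boundedness to asymptotic tracking via Barbalat's lemma. Since $\theta^*$ is constant we have $\dot\phi=\dot\theta$, so the update law \eqref{t1} reads $\dot\phi=-\gamma\,\text{sign}(k_p)e_y\omega$. The first step is to exploit Assumption \ref{assm4}: because $c_{mn}^T(sI-A_{mn})^{-1}b_{mn}=W_m^\prime(s)$ is SPR and $A_{mn}$ is Hurwitz, the Meyer--Kalman--Yakubovich lemma (in the version valid for stable, possibly non--minimal realizations, the non--minimality here being confined to the Hurwitz filter modes of $\Lambda$) furnishes symmetric positive definite matrices $P$ and $Q$ with
\ben
A_{mn}^T P + P A_{mn} = -Q, \qquad P b_{mn} = c_{mn}.
\een

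With these matrices in hand I would take the Lyapunov candidate
\ben
V = e_{mn}^T P e_{mn} + \frac{1}{\gamma\,\abs{k_p}}\phi^T\phi.
\een
Differentiating along the error dynamics and substituting $Pb_{mn}=c_{mn}$ produces the cross term $2e_{mn}^T P b_{mn}\phi^T\omega = 2c_{mn}^T e_{mn}\,\phi^T\omega = (2/k_p)e_y\,\phi^T\omega$, where I use $e_y=k_p c_{mn}^T e_{mn}$. Inserting the update law into the parameter term gives $(2/(\gamma\abs{k_p}))\phi^T\dot\phi = -(2/k_p)e_y\,\phi^T\omega$, so the two indefinite terms cancel exactly and
\ben
\dot V = -e_{mn}^T Q e_{mn} \le 0.
\een
Hence $V$ is non--increasing, and $e_{mn}$ and $\phi$ (therefore $\theta$) are bounded.

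The key to concluding that \emph{all} signals are bounded is the non--minimal representation itself. Because $x=e_{mn}+x_{mn}$ and the reference state $x_{mn}$ is generated by the Hurwitz matrix $A_{mn}$ driven by the bounded input $r$, the vector $x_{mn}$ is bounded; boundedness of $e_{mn}$ then forces $x=[x_p^T,\omega_1^T,\omega_2^T]^T$ to be bounded. In particular $y=k_pc_p^Tx_p$, the regressor $\omega$, and the input $u=\theta^T\omega$ are all bounded, which closes the loop on signal boundedness without any observability assumption on the error equation.

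Finally, integrating $\dot V$ shows $e_{mn}\in\mathcal L_2$, hence $e_y\in\mathcal L_2$. Since every signal is bounded, the right--hand side of the error dynamics is bounded, so $\dot e_{mn}$ and $\dot e_y$ are bounded and $e_y$ is uniformly continuous; Barbalat's lemma then yields $e_y\to 0$. The step I expect to be the main obstacle is the first one: justifying the Meyer--Kalman--Yakubovich factorization for the non--minimal quadruple $(A_{mn},b_{mn},c_{mn})$, i.e.\ verifying that the uncontrollable/unobservable modes introduced by the filters are exactly the Hurwitz eigenvalues of $\Lambda$ (and the stable zeros of $Z_m$) so that a single $P>0$ still satisfies both relations. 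Everything downstream of that factorization is a routine computation.
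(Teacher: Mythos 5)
Your proposal is correct and is essentially the paper's own argument: the paper proves this theorem by citing \cite[\S 5.3]{annbook}, whose proof is exactly the chain you give (SPR of $W_m^\prime(s)$ plus the Meyer--Kalman--Yakubovich lemma on the non--minimal error realization, the Lyapunov function $V=e_{mn}^TPe_{mn}+\phi^T\phi/(\gamma\abs{k_p})$ with exact cancellation of the indefinite term, boundedness of all signals via $x=e_{mn}+x_{mn}$, and Barbalat), and the paper repeats this same structure almost verbatim for the CRM case in Theorem 3. The obstacle you flag is resolved precisely as you anticipate: the Meyer version of the KY lemma applies because the hidden modes of $(A_{mn},b_{mn},c_{mn})$ are the filter eigenvalues (roots of $\lambda(s)=Z_m(s)$) together with the plant zeros, which are stable by Assumption \ref{ass1}.
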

\begin{proof} See \cite[\S 5.3]{annbook}.
\end{proof}

\section{CRM $n^*=1$}\label{s:crm1}
In the case of ORM adaptive control, the reference model only receives one input and is unaffected by the plant state trajectory. In order to facilitate the use of a Luenbereger feedback gain $\ell$ into the reference model, the reference model is chosen as\be\label{crm1:ref}
\dot x_m = A_m x_m +b_m k_m r + \ell(y-y_m), \quad y_m= c_m^T x_m
\ee
where $(A_m,b_m,c_m^T)$ is an $m$ dimensional system in observer canonical form
with ${c_m^T=[0 \ \ldots \ 0 \ 1]}$ and satisfying
\ben
c_{m}^T (sI- A_{m}) b_{m} k_m =  W_m(s).
\een
$y_m(t)$ is now related to the reference command $r(t)$ and model following error $e_y(t)$ as
\be\label{ym:oc}
y_m(t) = W_m(s) r(t) + W_\ell(s) (y(t)-y_m(t))
\ee
where
\be\label{wl}
W_\ell (s) \triangleq k_\ell \frac{Z_\ell(s)}{P_m(s)},
\ee
and $k_\ell \in \Re$ along with the $m-1$ order monic polynomial $Z_\ell(s)$ are a function of $\ell$ and free to choose. Subtracting \eqref{ym:oc} from \eqref{yptf1} results in the following differential relation
\be \label{e2}e_y = k_p W^\prime_e(s) \phi^T  \omega
\ee
where
\be\label{epmzl}
W^\prime_e(s)\triangleq \frac{Z_m(s)}{P_m(s)-k_\ell Z_\ell(s)}.
\ee

\begin{lem}\label{lem:ell}
An $\ell$ can be chosen such that ${W^\prime_e(s)}$ is SPR for any ${n^*=1}$ and minimum phase transfer function ${W^\prime_m(s)}$.
\end{lem}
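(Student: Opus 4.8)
The plan is to use the Luenberger gain $\ell$ as a design handle that grants complete freedom over the numerator of $W_\ell(s)$, and then to select the resulting denominator of $W_e'(s)$ so that the whole transfer function collapses onto a first order lag that is SPR by inspection.

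First I would make explicit how $\ell$ parametrizes $W_\ell(s)=c_m^T(sI-A_m)^{-1}\ell$. Because $(A_m,b_m,c_m^T)$ is in observer canonical form with $c_m^T=[0\ \cdots\ 0\ 1]$, the resolvent row evaluates to $c_m^T(sI-A_m)^{-1}=P_m(s)^{-1}[\,1\ \ s\ \ \cdots\ \ s^{m-1}\,]$, so that $W_\ell(s)=P_m(s)^{-1}\sum_{i=1}^{m}\ell_i s^{i-1}$. Comparing with \eqref{wl}, the numerator is $k_\ell Z_\ell(s)=\sum_{i=1}^m \ell_i s^{i-1}$, and the map from $\ell\in\Re^m$ to the coefficient vector of this numerator is the identity, hence a bijection onto the space of all polynomials of degree at most $m-1$. (Equivalently, this surjectivity is just the observability of $(A_m,c_m^T)$.)

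Next, from the definition \eqref{epmzl} the denominator $\tilde P(s)\triangleq P_m(s)-k_\ell Z_\ell(s)$ is monic of degree $m$, and the previous step shows that its $m$ lower order coefficients may be assigned arbitrarily by choosing $\ell$. I would therefore set $\tilde P(s)=(s+\lambda)Z_m(s)$ for an arbitrary $\lambda>0$; this is admissible since $(s+\lambda)Z_m(s)$ is monic of degree $m$, and matching coefficients fixes $\ell$ uniquely. The factor $Z_m(s)$ is Hurwitz precisely because $W_m'(s)$ is minimum phase, so the resulting cancellation $W_e'(s)=Z_m(s)/[(s+\lambda)Z_m(s)]=1/(s+\lambda)$ removes only stable modes and is benign for the error system.

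Finally I would verify SPR directly for the reduced function: $1/(s+\lambda)$ is analytic in $\text{Re}(s)\geq 0$, has relative degree one, and satisfies $\text{Re}[1/(j\omega+\lambda)]=\lambda/(\omega^2+\lambda^2)>0$ for all real $\omega$, with $\lim_{\omega\to\infty}\omega^2\,\text{Re}[1/(j\omega+\lambda)]=\lambda>0$; these are exactly the conditions for a relative degree one transfer function to be SPR. The main obstacle is the first step, establishing that $\ell$ confers unrestricted control over $k_\ell Z_\ell(s)$; everything else is then essentially free. Once the explicit resolvent row of the observer canonical form is written down the surjectivity is immediate, and if one prefers to avoid exact pole--zero cancellation the same freedom over $\tilde P(s)$ lets one instead place the $m$ poles of $W_e'(s)$ to satisfy the positivity condition with a small perturbation of $(s+\lambda)Z_m(s)$.
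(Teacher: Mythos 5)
Your proof is correct, and its core idea is the same as the paper's: the gain $\ell$ gives complete freedom over the numerator polynomial $k_\ell Z_\ell(s)$ of $W_\ell(s)$ in \eqref{wl}, hence over all non-leading coefficients of the denominator $P_m(s)-k_\ell Z_\ell(s)$ of $W^\prime_e(s)$ in \eqref{epmzl}, so the poles of the error transfer function can be assigned at will. Where you go beyond the paper is in the last step: the paper simply \emph{asserts} that, with fixed Hurwitz zeros $Z_m(s)$ and freely assignable poles, the roots can be placed so that $W^\prime_e(s)$ is SPR, whereas you exhibit a concrete choice, $P_m(s)-k_\ell Z_\ell(s)=(s+\lambda)Z_m(s)$, which cancels the (stable, by minimum phase) zeros and reduces $W^\prime_e(s)$ to $1/(s+\lambda)$, whose SPR property you verify from the definition. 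You also justify, via the resolvent row of the observer canonical form, that the map $\ell\mapsto k_\ell Z_\ell(s)$ is onto all polynomials of degree at most $m-1$ --- a fact the paper uses implicitly (and which is the reason Remark 2 insists on the canonical structure of \eqref{crm1:ref}). Your construction buys two things: it closes the one step the paper leaves to the reader, and it shows that the pole-only error dynamics that the paper obtains in Example \ref{ex:11} only by restricting the reference model to first order can in fact be achieved for \emph{any} minimum phase $W^\prime_m(s)$, at the mild cost of an exact (stable) pole--zero cancellation, which you correctly note can be avoided by a small perturbation if desired.
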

\begin{proof}
The product $k_\ell Z_\ell(s)$ a polynomial of order ${n-1}$ with ${n-1}$ degrees of freedom through $\ell$. $P_m(s)$ is a monic polynomial of degree $n$. Therefore, ${P_m(s)-k_\ell Z_\ell(s)}$ is a monic polynomial of order $n$ with $n-1$ degrees of freedom determined by $\ell$. Thus for any $Z_m(s)$ the roots of $W^\prime _e(s)$ can be placed freely in the closed left--half plane such that $W^\prime_e(s)$ is SPR.
\end{proof}

Let
\be
A_e=A_{mn}+G\ell k_p c^T_{mn}
\ee
where $G$ transforms $x_m$ to the controllable subspace in $x_{mn}$, which always exist \cite{kal:siam63}. The non--minimal error dynamics therefore take the form
\be
\dot e_{mn}(t) = A_e e_{mn}(t) + b_{mn} \phi(t) \omega(t).
\ee
\begin{rem}
It is worth noting that in the construction of the minimal and non--minimal systems the location of the gains $k_p$ and $k_m$ switch from being located at the input to the output. The non--minimal systems is never created and thus need not be realized. Therefore, the influence of $k_p$ whether it be on the input or output matrix of the state space does not matter. For the case of the minimal reference model in \eqref{crm1:ref} it is critical however that $k_m$ appears at the input of the system. This is done on purpose so that given the canonical form of $c_m$ the $\ell$ in \eqref{crm1:ref} completely determines the zeros and high frequency gain of $W_\ell(s)$ in \eqref{wl}.\end{rem}

\begin{thm} Following Assumptions \ref{ass1}-\ref{ass3} and $\ell$ chosen as in Lemma \ref{lem:ell}, the plant in \eqref{iop} with the reference model in \eqref{crm1:ref}, controller in \eqref{u1} and the update law in \eqref{t1} are globally stable with the model following error asymptotically converging to zero.
\end{thm}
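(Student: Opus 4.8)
The plan is to reproduce the Lyapunov stability argument of the classical $n^*=1$ proof \cite[\S 5.3]{annbook}, the only structural change being that the strict positive realness that closes the argument is now furnished by $W^\prime_e(s)$ via the Luenberger gain $\ell$ (Lemma \ref{lem:ell}), instead of being imposed on $W^\prime_m(s)$ through Assumption \ref{assm4}. First I would record that $(A_e,b_{mn},c_{mn})$ is a (non--minimal) realization of $W^\prime_e(s)$, i.e. $c_{mn}^T(sI-A_e)^{-1}b_{mn}=W^\prime_e(s)$, in agreement with \eqref{e2}. Since $\ell$ has been chosen so that $W^\prime_e(s)$ is SPR, and since the modes rendered unobservable and uncontrollable in this non--minimal realization coincide with the stable plant zeros (Assumption \ref{ass1}) and the Hurwitz eigenvalues of the filter $\Lambda$, the matrix $A_e$ is Hurwitz. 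Invoking the Meyer--Kalman--Yakubovich lemma then yields $P=P^T>0$ and $Q=Q^T>0$ with $A_e^T P+PA_e=-Q$ and $Pb_{mn}=c_{mn}$.

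Next I would take the candidate $V=e_{mn}^T P e_{mn}+\tfrac{1}{\gamma\abs{k_p}}\phi^T\phi$ and differentiate it along $\dot e_{mn}=A_e e_{mn}+b_{mn}\phi^T\omega$ together with the update law \eqref{t1}, which gives $\dot\phi=-\gamma\,\text{sign}(k_p)e_y\omega$. Using $Pb_{mn}=c_{mn}$ and $e_y=k_p c_{mn}^T e_{mn}$, the cross term $2e_{mn}^T c_{mn}\phi^T\omega=\tfrac{2}{k_p}e_y\phi^T\omega$ exactly cancels the adaptation term $-\tfrac{2\,\text{sign}(k_p)}{\abs{k_p}}e_y\phi^T\omega$ once the weight $1/\abs{k_p}$ is chosen as above, leaving $\dot V=-e_{mn}^T Q e_{mn}\le 0$. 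Hence $e_{mn},\phi\in L_\infty$, and integrating $\dot V$ shows $e_{mn}\in L_2$.

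It then remains to bound the remaining signals and establish convergence. Because $A_e$ is Hurwitz and the non--minimal reference state obeys $\dot x_{mn}=A_e x_{mn}+b_{mn}k^* r$ with $r$ bounded, $x_{mn}\in L_\infty$; therefore $x=e_{mn}+x_{mn}\in L_\infty$, which bounds the plant state together with $\omega_1,\omega_2$ and hence $\omega$, after which $u=\theta^T\omega\in L_\infty$. With every signal bounded, $\dot e_{mn}\in L_\infty$, so $e_{mn}\in L_2\cap L_\infty$ has a bounded derivative and Barbalat's lemma gives $e_{mn}\to 0$, whence $e_y\to 0$.

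I expect the main obstacle to be the very first step: justifying the Meyer--Kalman--Yakubovich lemma on the \emph{non--minimal} triple $(A_e,b_{mn},c_{mn})$, since the textbook KYP lemma presumes a minimal realization. The resolution is to exploit that the hidden modes are stable so that $A_e$ is Hurwitz and a positive--definite $P$ satisfying the matching condition $Pb_{mn}=c_{mn}$ still exists; this is precisely the non--minimal--representation device advertised in the introduction, and it is what permits the SPR property of $W^\prime_e(s)$, rather than that of $W^\prime_m(s)$, to drive the entire proof.
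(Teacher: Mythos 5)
Your proposal is correct and takes essentially the same route as the paper: the paper's proof is precisely the one-line invocation of the KY conditions on the non-minimal realization ($A_e^T P_e + P_e A_e = -Q_e$, $P_e b_{mn} = c_{mn}$, furnished by the SPR property of $W^\prime_e(s)$ from Lemma \ref{lem:ell}), the Lyapunov function $V = e_{mn}^T P_e e_{mn} + \phi^T\phi/(\gamma\abs{k_p})$, and Barbalat's lemma. The details you add --- Hurwitz hidden modes justifying the non-minimal KY step, the exact cross-term cancelation, and the signal-boundedness chain that Barbalat actually requires --- are the right ones and are exactly what the paper leaves implicit by citing the textbook argument.

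One small slip, in a step the paper does not spell out: the non-minimal CRM reference state does not obey $\dot x_{mn} = A_e x_{mn} + b_{mn} k^* r$. From \eqref{crm1:ref} the reference model is driven by the tracking error, so in non-minimal coordinates $\dot x_{mn} = A_{mn} x_{mn} + b_{mn} k^* r + G\ell\, e_y$; it is the \emph{error} equation, not the reference model, whose system matrix is $A_e$. This does not break your argument: $A_{mn}$ is Hurwitz (its eigenvalues are the reference-model poles, the minimum-phase plant zeros, and the eigenvalues of $\Lambda$), and both $r$ and $e_y$ are bounded after the Lyapunov step, so $x_{mn}\in L_\infty$ still holds and the remainder of your chain ($x$, $\omega$, $u$ bounded, hence $\dot e_{mn}$ bounded, hence $e_{mn}\to 0$ and $e_y\to 0$) goes through unchanged.
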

\begin{proof} Given that $W^\prime_e(s)$ is SPR, there exists a ${P_e=P_e^T>0}$ such that
\be
A_{e}^T P_e+P_e A_{e}= - Q_e \text{ and } P_e b_{mn} = c_{mn}.
\ee
where ${Q_e=Q_e^T>0}$. Thus
\be\label{lyap}
V=e_{mn}^T P_{e}e_{mn} + \frac{\phi^T \phi}{\gamma \abs{k_p}}
\ee
is a Lyapunov function with derivative $\dot V=-e_{mn}^T Q_{e}e_{mn}$. Barbalat Lemma ensures the asymptotic convergence of $e_{mn}$ to zero.
\end{proof}

\subsection{Performance}
Now that we have proved stability we can return to a minimal representation of the error dynamics in \eqref{e2} which is  
\be
\dot e_m = A_{\ell} e_m + b_m k_p \phi^T \omega, \quad e_y = c_m^T e_m;
\ee
where the all the eigen--values of $A_\ell$ are the roots to  ${P_m(s)-k_\ell Z_\ell(s)}$, as can be seen from \eqref{epmzl}. Recall the Anderson version of KY Lemma;
\be
A_\ell^T P + P A_\ell = -gg^T - 2\mu P; \quad
Pb_m = c_m
\ee
where 
\be\label{eq:mu}
\mu\triangleq \min_i\left| \lambda_i(A_\ell)\right|,\quad i=1\text{ to }m.
\ee
The following performance function 
\be
V_p=e_m^TPe_m +  \frac{\phi^T \phi}{\gamma \abs{k_p}}
\ee
has a time derivative
\be\label{crm1:vp}
\dot V_p \leq -2 \mu e_m^TPe_m.
\ee
From \eqref{crm1:vp} it directly follows that
\be\label{ynormstar1}
\norml{e_y(t)}{2}^2\leq \frac{1}{2\mu} \left( \frac{\lambda_\text{max}(P)}{\lambda_\text{min}(P)}\norm{e(0)}^2+\frac{1}{\gamma \abs{k_p}}\frac{\norm{\phi(0)}^2}{\lambda_{\text{min}}(P)}\right).
\ee

\begin{exmp} \label{ex:11} The transfer function $W^\prime_e(s)$ must be SPR, therefore, the poles of $W^\prime_e(s)$ are limited by the location of its zeros. The order of $A_m$ however is free to choose so long as $m\geq1$, thus we can choose $m=1$. Therefore making 
\ben
W_m(s)= k_m \frac{1}{s+a_m}
\een
where $b_m=k_m$ and $A_m=-a_m$. The closed loop reference model transfer function therefore is 
\be\label{ex:1}
W_e(s)=k_m\frac{1}{s+a_m+l}
\ee
where ${\ell=-l}$, ${l>0}$. From \eqref{ex:1}, it is clear that there are no zeros limiting the location of the closed loop pole. 

Further more, the Anderson Lemma reduces to the trivial solution of $P=1$, $g=0$, and $\mu=a_m+l$. Since there are no zeros to worry about $W^\prime _e(s)$ is SPR for all $l$. Therefore, $\mu$ can can be chosen arbitrarily. The bound in \eqref{ynormstar1} for this example simplifies to
\be\label{ex1ey}
\norml{e_y(t)}{2}\leq \frac{1}{2(a_m+l)} \left( \norm{e(0)}^2+\frac{\norm{\phi(0)}^2}{\gamma\abs{k_p}}\right).
\ee
\end{exmp}

\begin{rem}
The use of CRMs has two advantages compared to the use of ORMs. The first is that the reference model need not be SPR a priori, but only needs to be of appropriate relative degree. There are several methods of dealing with non--SPR reference models for ${n^*=1}$, but these methods require the use of pre--filters \cite{kru02}, or augmented error approaches (see \cite{annbook}, and Section \ref{arbstar}).

The second advantage is illustrated in Example \ref{ex:11}. Using this approach, a reference model can be chosen such that it has no zeros. When this is done and a CRM is used, the location of the slowest pole of the error model dynamics is free to choose. When using ORMs, the location of the slowest eigenvalue of the closed--loop error model is not free to choose, as speeding up the reference model eigenvalues without the use of CRMs will require the use of high--gain feedback which is equivalent to $\norm{\theta^*}$ being large if the open--loop plant has slow eigenvalues.
\end{rem}

\section{CRM SISO $n^*=2$}
Consider the dynamics in \eqref{iop} where the relative degree of the transfer function in \eqref{eq:w} is now  2 instead of 1 and the reference to be followed is the CRM in \eqref{crm1:ref}. The control input in \eqref{u1} will no  longer lead to stable adaptation and must be adjusted as
\begin{align}
u(t)=&\dot\theta^T(t) \zeta(t) + \theta^T(t) \omega(t) \\
\dot\theta(t) =& - \text{sign}(k_p)e_y(t) \zeta(t)^T
\end{align}
where $\zeta(t)$ is a filtered version of the regressor vector $\omega$ and defined as \be
\zeta (t) = A^{-1}(s) \omega(t) \text{ where }  
A(s) = s+ a.\label{As}
\ee
Using the same reference model as in \eqref{crm1:ref}, the error $e_y(t)$ now takes the form
\be \label{e:2}e_y(t) =  k_p W^\prime _e(s) A(s) \phi^T(t)  \zeta(t).
\ee
With $\ell$ and $A(s)$ chosen such that the transfer function $W^\prime_e(s) A(s)$ is SPR the CRM adaptive controller for $n^*=2$ is stable.

\subsection{Performance}
The same analysis performed in the previous section can be used to analyze the ${n^*=2}$ case. The minimum eigenvalue of $W^\prime_e(s)A(s)$ in \eqref{e:2} along with $\gamma$ control the $\mathcal L_2$ norm of $e_y$. As in the previous example, a reference model with no zeros that is relative degree 2 can be chosen. Then, the zeros of $W^\prime_e(s)A(s)$ are completely determined by $A(s)$ and the poles are freely placed with $\ell$. Thus any SPR transfer function of order 2 can be created with an arbitrarily fast slowest eigenvalue.

\section{CRM Arbitrary $n^*$}\label{arbstar}
The adaptive controller for $n^*=2$ is special given that we have access to $\dot\theta(t)$. Instead, for higher relative degrees it is common to use an augmented error approach, where by the original model following error $e_y$ is not used to adjust the adaptive parameter, but an augmented error signal which does satisfy the SPR conditions needed for stability. The augmented error method used in this result is Error Model 2 as presented in \cite[\S 5.4]{annbook}, with some changes to the notation.

For ease of exposition and clarity in presentation we present the $k_p$ known and $k_p$ unknown presentation in two sections.
\subsection{Stability for  known high frequency gain}
We begin by replacing Assumption 2 with:

{\it Assumption 2$^\prime$}: $k_p$ is known. \\
\noindent Without loss of generality we choose $k_m=k_p=1$ and the control input for the generic relative degree case reduces to
\be\label{un}
u(t)=r(t)+\widebar\theta^T(t) \widebar\omega(t)
\ee
where $\widebar{(\cdot)}$ denotes the vectors,
\begin{align}
\widebar\omega(t) &\triangleq [\omega_1^T(t),\ y(t),\ \omega_2^T(t) ]^T \label{wbaro}\\
\widebar\theta(t) &\triangleq [ \theta_1^T(t),\ \theta_0(t),\ \theta_2^T(t)  ]^T.
\end{align}
A feedforward time varying adaptive gain $k(t)$ is no longer needed and thus $r(t)$ has been removed from the regressor vector do to the fact that $k_p=k_m=1$. The model following error then, satisfies the following differential relation
\be \label{e3}e_y =  W^\prime_e(s) \widebar\phi^T  \widebar\omega
\ee
where the reader is reminded that the prime notation removes the high frequency gain from transfer functions, and since $k_m=k_p=1$, $W^\prime_e(s) = W_e(s)$. Similar to the use of $A(s)$ in \eqref{As} for the relative degree 2 case, a stable minimally realized filter $F(s)$ with no zeros is used to generate the filtered regressor
\be\label{wz}
\widebar\zeta = F(s) I \widebar \omega
\ee
where $I$ is the $2n-1$ by $2n-1$ identity matrix, $F(s)$ designed with unity high frequency gain, and $F(s)$ and $\ell$ chosen so that
\be
W^\prime_f(s)  \triangleq W^\prime_e(s)F^{-1}(s)
\ee
is SPR.
\begin{lem}
For any stable $F(s)$ an $\ell$ can be chosen such that $W^\prime_f(s)$ is SPR.
\end{lem}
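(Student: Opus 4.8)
The plan is to recognize that $W_f^\prime(s)$ has exactly the structure already treated in Lemma \ref{lem:ell} and to reduce the claim to that lemma. First I would unpack the hypotheses on the filter. Because $F(s)$ is stable, minimally realized, has no zeros, unity high frequency gain, and is taken of relative degree $n^*-1$ (so that $W_e^\prime(s)F^{-1}(s)$ comes out relative degree one), its inverse $F^{-1}(s)$ is a monic Hurwitz polynomial of degree $n^*-1$. Substituting \eqref{epmzl} gives
\be
W_f^\prime(s)=W_e^\prime(s)F^{-1}(s)=\frac{Z_m(s)\,F^{-1}(s)}{P_m(s)-k_\ell Z_\ell(s)}.
\ee
Since the reference model is minimum phase, $Z_m(s)$ is Hurwitz of degree $m-n^*$, so the numerator $Z_m(s)F^{-1}(s)$ is a fixed Hurwitz polynomial of degree $m-1$, while the denominator is monic of degree $m$. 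Hence $W_f^\prime(s)$ is a relative-degree-one transfer function with a Hurwitz numerator.

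This is precisely the configuration of Lemma \ref{lem:ell}, only with the fixed numerator $Z_m(s)$ there replaced by the (still fixed and still Hurwitz) numerator $Z_m(s)F^{-1}(s)$ of the same degree $m-1$. I would therefore invoke the degree-counting step of that lemma: the product $k_\ell Z_\ell(s)$ has degree $m-1$ and its coefficients are assigned through $\ell$, so $P_m(s)-k_\ell Z_\ell(s)$ is a monic degree-$m$ polynomial whose non-leading coefficients are free. The poles of $W_f^\prime(s)$ can thus be placed so as to render it SPR, exactly as concluded in Lemma \ref{lem:ell}.

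To make the rendering explicit, and to confirm the available pole freedom really suffices, I would exhibit one admissible choice: pick $\ell$ so that $P_m(s)-k_\ell Z_\ell(s)=Z_m(s)F^{-1}(s)\,(s+\alpha)$ with $\alpha>0$. The right-hand side is a monic Hurwitz polynomial of degree $(m-n^*)+(n^*-1)+1=m$, hence attainable by the freedom just established, and it shares the Hurwitz factor $Z_m(s)F^{-1}(s)$ with the numerator. After cancellation $W_f^\prime(s)=1/(s+\alpha)$, which is SPR for every $\alpha>0$; because the cancelled factor is Hurwitz, no unstable hidden modes are introduced, which matters when this transfer function is later used inside the augmented-error stability argument.

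The main obstacle is not the SPR check itself but verifying that the setup genuinely matches Lemma \ref{lem:ell}. One must confirm that $F(s)$ has relative degree exactly $n^*-1$, so that $W_f^\prime(s)$ is relative degree one rather than higher (a prerequisite for SPR), and that $\ell$ supplies enough free coefficients in the denominator to place the poles as needed. Both are inherited from the construction: the relative degree of $F$ is fixed by the filtering requirement in \eqref{wz}, and the coefficient count is identical to the one already used in Lemma \ref{lem:ell}. With these confirmed, the reduction goes through.
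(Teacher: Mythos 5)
Your proposal is correct and follows essentially the same route as the paper, which proves this lemma by simply invoking the degree-counting argument of Lemma \ref{lem:ell}: since $F^{-1}(s)$ is a fixed monic Hurwitz polynomial, the numerator $Z_m(s)F^{-1}(s)$ remains fixed and Hurwitz while $\ell$ freely assigns the monic degree-$m$ denominator $P_m(s)-k_\ell Z_\ell(s)$, so the poles can be placed to make $W^\prime_f(s)$ SPR. Your additional explicit choice forcing $W^\prime_f(s)=1/(s+\alpha)$ via a Hurwitz pole--zero cancellation is a constructive refinement the paper does not spell out, but it is elaboration of the same argument rather than a different approach.
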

\begin{proof} The proof follows the same arguments as in Lemma \ref{lem:ell}.\end{proof}

The tuning law for the arbitrary relative degree case uses an augmented error $e_a$, which is generated from the model following error $e_y$ and an auxiliary error $e_\chi$. Using the CRM in \eqref{crm1:ref}, the augmented and auxiliary error are defined as:
\begin{align}
e_a& \triangleq e_y + W^\prime_f(s) \left( e_\chi  - e_a\widebar \zeta^T \widebar\zeta\right) \label{ea}\\
e_\chi& \triangleq \widebar\theta^T \widebar\zeta - F(s)\widebar\theta^T \widebar\omega\label{eu}.
\end{align}
A stable tuning law for the system is then defined as
\be\label{tdbar}
\dot{\widebar\theta} = -\gamma e_a \bar\zeta.
\ee

\begin{thm}
Following Assumptions 1, 2$^\prime$ and 3, with $\ell$ chosen such that $W^\prime_f(s)$ is SPR, the plant in \eqref{iop} with the reference model in \eqref{crm1:ref}, controller in \eqref{un} and update law in \eqref{tdbar} are globally stable with the model following error $e_y$ asymptotically converging to zero.
\end{thm}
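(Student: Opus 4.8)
The plan is to reduce this arbitrary relative degree case to the now-familiar SPR-based Lyapunov argument by showing that the augmented error $e_a$ obeys a dynamics driven by the parameter error $\widebar\phi$ through the transfer function $W^\prime_f(s)$, which has been made SPR by the choice of $\ell$ (via the preceding Lemma). The key is to manipulate the definitions \eqref{ea}--\eqref{eu} to obtain a clean expression for $e_a$ in terms of $\widebar\phi$. First I would substitute \eqref{eu} into \eqref{ea} and use \eqref{e3}, namely $e_y = W^\prime_e(s)\widebar\phi^T\widebar\omega$ together with $W^\prime_e(s) = W^\prime_f(s)F(s)$, so that the $F(s)\widebar\theta^T\widebar\omega$ term in $e_\chi$ recombines with $e_y$. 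The aim of this algebra is to arrive at a relation of the schematic form
\be
e_a = W^\prime_f(s)\!\left(\widebar\phi^T\widebar\zeta - e_a\,\widebar\zeta^T\widebar\zeta\right),
\ee
where the filtered regressor $\widebar\zeta = F(s)\widebar\omega$ from \eqref{wz} appears naturally once the filtering is commuted through the (constant-in-the-idealized-sense) parameter. The small but important subtlety here is that $\widebar\theta(t)$ is time varying, so $F(s)\widebar\theta^T\widebar\omega \neq \widebar\theta^T F(s)\widebar\omega$ exactly; the term $e_\chi$ in \eqref{eu} is precisely the swapping error designed to account for this, and I expect this to be the main obstacle in making the reduction rigorous rather than formal.

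Once the relation displayed above is in hand, I would realize $W^\prime_f(s)$ in state space as a minimal triple, say with state $\epsilon$, so that $e_a = h^T\epsilon$ and
\be
\dot\epsilon = A_f\epsilon + b_f\!\left(\widebar\phi^T\widebar\zeta - e_a\,\widebar\zeta^T\widebar\zeta\right),\quad e_a = h^T\epsilon,
\ee
with $(A_f,b_f,h)$ SPR. By the Kalman--Yakubovich lemma there then exist $P=P^T>0$ and $Q=Q^T>0$ with $A_f^TP+PA_f=-Q$ and $Pb_f=h$. This is exactly the structure exploited in the $n^*=1$ proofs, so the remaining steps follow the same template.

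Next I would propose the Lyapunov candidate combining the auxiliary state and the parameter error,
\be
V = \epsilon^TP\epsilon + \frac{\widebar\phi^T\widebar\phi}{\gamma},
\ee
and differentiate along \eqref{tdbar}. Using $\dot{\widebar\theta}=\dot{\widebar\phi}=-\gamma e_a\widebar\zeta$, the cross terms involving $\widebar\phi^T\widebar\zeta$ cancel exactly against the contribution of the parameter-error term in $\dot V$, which is the whole design purpose of the augmented error. What survives is
\be
\dot V = -\epsilon^TQ\epsilon - 2\,e_a^2\,\widebar\zeta^T\widebar\zeta \le 0,
\ee
the extra nonpositive term coming from the $-e_a\widebar\zeta^T\widebar\zeta$ feedback built into \eqref{ea}. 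Boundedness of $\epsilon$ and $\widebar\phi$ follows immediately, and hence $e_a\in\mathcal L_2\cap\mathcal L_\infty$.

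Finally I would close the argument in two stages, mirroring the stability-then-performance structure emphasized in the introduction. Boundedness of the full (non-minimal) state, and in particular of $\widebar\omega$ and $\widebar\zeta$, must be established so that the signals entering $\dot V$ are well behaved; here I would invoke the minimum-phase Assumption~\ref{ass1} and a standard regularity/growth argument (the signals grow at the same rate in the sense of the paper's Definition~4) to rule out finite escape and to bound $\widebar\zeta$ in terms of $e_a$. With all signals bounded, $\dot e_a$ is bounded, so by Barbalat's lemma $e_a\to0$. The last step is to recover convergence of the true model following error $e_y$ from that of $e_a$: from \eqref{ea}, $e_y - e_a = W^\prime_f(s)(e_a\widebar\zeta^T\widebar\zeta - e_\chi)$, and one shows the right-hand side tends to zero using $e_a\to0$, boundedness of $\widebar\zeta$, and the fact that the swapping error $e_\chi$ vanishes asymptotically because $\dot{\widebar\theta}\to0$. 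I anticipate that verifying $e_\chi\to0$ and controlling the $W^\prime_f(s)$-filtered products is the most delicate bookkeeping in the proof, but it is routine once the SPR reduction and the Lyapunov inequality are in place.
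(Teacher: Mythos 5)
Your proposal reproduces the paper's own strategy almost step for step: the algebraic reduction of \eqref{ea}--\eqref{eu} to $e_a = W^\prime_f(s)\left(\widebar\phi^T\widebar\zeta - e_a\widebar\zeta^T\widebar\zeta\right)$ (the paper additionally tracks an exponentially decaying term $\delta(t)$ from mismatched filter initial conditions, set to zero by choosing identical initial conditions in $\widebar\zeta$ and $e_\chi$), the KY/SPR Lyapunov function $V = e_{an}^TP_a e_{an} + \widebar\phi^T\widebar\phi/\gamma$ with $\dot V \leq -e_{an}^TQ_a e_{an} - 2e_a^2\widebar\zeta^T\widebar\zeta$, and then a signal-growth argument followed by Barbalat. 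Up through the conclusions $\widebar\theta\in\mathcal L_\infty$ and $e_a,\dot{\widebar\theta}\in\mathcal L_2$, your outline is the paper's proof (the paper realizes $W^\prime_f(s)$ non-minimally rather than minimally, but that is immaterial here).

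The gap is in how you propose to close the boundedness argument. You say you would ``bound $\widebar\zeta$ in terms of $e_a$''; no such bound exists, and none is used. The paper's argument is by contradiction: assume the signals grow unbounded; then (minimum phase, $\widebar\theta$ bounded) $y$, $\omega_1$, $\omega_2$, $\widebar\omega$, $\widebar\zeta$, $u$ all grow at the same rate as in \eqref{sim}; then the Swapping Lemma together with $\dot{\widebar\theta}\in\mathcal L_2$ gives $e_\chi = o\left[\sup_{\tau\leq t}\norm{\widebar\omega(\tau)}\right]$ and $W^\prime_f(s)\left(e_a\widebar\zeta^T\widebar\zeta\right) = o\left[\sup_{\tau\leq t}\norm{\widebar\zeta(\tau)}\right]$, whence $y(t) = o\left[\sup_{\tau\leq t}\norm{\widebar\omega(\tau)}\right]$, contradicting the equal growth rates. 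Moreover, there is a CRM-specific subtlety that your sketch, which follows the classical ORM template, does not address and which the paper explicitly flags: because the reference model is driven by $\ell(y-y_m)$, one cannot assume a priori that $y_m$ is bounded. The contradiction must therefore be reached by writing $y = W^\prime_m(s)r + \left(1+W^\prime_\ell(s)\right)e_y$, expanding $e_y$ through the augmented-error identity, and using asymptotic stability of $1+W^\prime_\ell(s)$ before the $o[\cdot]$ estimates can be applied; without this the growth of $y_m$ is uncontrolled and the argument stalls. A last, smaller point: $e_\chi\to 0$ does not follow ``because $\dot{\widebar\theta}\to 0$''; it follows from the swapping-lemma estimate $e_\chi = o\left[\sup_{\tau\leq t}\norm{\widebar\omega(\tau)}\right]$ once $\widebar\omega$ has been shown bounded, with $\dot{\widebar\theta}\in\mathcal L_2$ as the driving hypothesis.
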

\begin{proof} The proof proceeds in 4 steps.  First it is shown that $\widebar\theta(t)$ and $e_a$ are bounded and that ${e_a,\dot{\widebar\theta} \in \mathcal L_2}$. Second, treating $\widebar\theta(t)$ as a bounded time--varying signal, then all signals in the adaptive system can grow at most exponentially. Third, if it is assumed that the signals grow in an unbounded fashion, then it can be shown that $y$, $\omega_1$ $\omega_2$, $\widebar\omega$, $\widebar\zeta$ and $u$ grow at the same rate. Finally, from the fact that ${\dot{\widebar\theta}\in \mathcal L_2}$ it is shown that $\omega_2$ and $\widebar\omega$ do not grow at the same rate. This results in a contradiction and therefore, all signals are bounded and furthermore, $e_y(t)$ asymptotically converges to zero. Steps 1 and 4 are detailed below. Steps 1-3 follow directly from \cite[\S 5.5]{annbook} with little changes. Step 4 does involve a modification to the analysis which is addressed in detail next.
\subsubsection*{Step 1}
Expanding the error dynamics in \eqref{ea} and canceling like terms of $W_e^\prime(s)\widebar\theta^T\omega$ we have
\ben
e_a = -W^\prime_e(s) \widebar\theta^{*T} \widebar\omega+W^\prime_f(s) \left( \widebar \theta^T \widebar\zeta  - e_a\widebar \zeta^T \widebar\zeta\right). 
\een
Adding and subtracting $W^\prime_f(s) \widebar\theta^{*T} \widebar\zeta$ the equation becomes
\be\label{eq:ea2}
e_a = W^\prime_f(s) \left( \widebar \phi^T \widebar\zeta  - e_a\widebar \zeta^T \widebar\zeta\right)  + \delta(t)
\ee
where $\delta(t)$ is an exponentially decaying term do to initial conditions and defined as
\be
\delta(t) = W^\prime_f(s)\left(\widebar\theta^{*T} \widebar\zeta(t) - F(s)\widebar\theta^{*T} \widebar\omega(t)\right).
\ee
Breaking apart $\widebar\zeta$ from its definition in \eqref{wz} and noting that $\widebar\theta^*$ now commutes with $F(s)$ we have that
\be
\delta(t)= W^\prime_f(s)\left(\widebar\theta^{*T} \left( F(s)  - F(s)\right) I \widebar\omega\right).
\ee
Therefore, if the filter $F(s)$ is chosen to have the same initial conditions when constructing $\widebar\zeta$ and $e_\chi$ then, $\delta=0$ for all time.  For this reason we ignore the affect of choosing different filter initial conditions. The interested reader can see how one can prove stability in augmented error approaches where $\delta(0)\neq 0$ \cite[pg. 213]{annbook}, with the addition of an extra term in the  Lyapunov function. 

A non--minimal representation of $e_a$ is given as
\be\label{eanm}
\dot e_{an} = A_{e} e_{an} + b_{an}\left( \widebar\phi^T \widebar \zeta - e_a \widebar\zeta^T \widebar\zeta\right), \quad e_a = c^T_{an} e_{an}
\ee
where
\be
c_{an}^T(sI-A_e)^{-1} b_{an} \triangleq W^\prime_f(s).
\ee
Given that $W_f(s)$ is SPR, there exists a ${P_a=P_a^T>0}$ such that
\be
A_e^T P_a+P_a A_{e}= - Q_a \text{ and } P_a b_{an} = c_{an}.
\ee
where ${Q_a=Q_a^T>0}$.

Consider the Lyapunov candidate
\be\label{lyap}
V=e_{an}^T P_{a}e_{an} + \frac{\phi^T \phi}{\gamma}
\ee
Differentiating along the system dynamics in \eqref{eanm} and substitution of the tuning law from \eqref{tdbar} results in
\be
\dot V \leq - e_{an}^TQ_a e_{an} - 2  e_a^2 \widebar\zeta^T \widebar\zeta.
\ee
Therefore, $e_{an},\widebar\theta \in \mathcal L_\infty$ and $e_{an},\dot{\widebar\theta} \in\mathcal L_2$

\subsubsection*{Step 2}  The plant dynamics can be expressed as
\be
\dot x = A_{mn} x + b_{mn} (\widebar\phi^T(t)\omega + r); \quad y = c^T_{mn}x
\ee
where with an appropriate choice of a $C$ can be expressed as
\be
\dot x = \left(A_{mn} +  b_{mn} \widebar\phi^T(t)C \right) x + b_{mn} r
\ee
From Step 1 it is known that $\widebar\phi$ is bounded, and therefore $x$ grows at most exponentially. Futhermore, for $r$ piecewise continuous, $x$ and $\widebar\zeta$ are both piecewise continuous as well.
\subsubsection*{Step 3} If it is assumed that all signals grow in an unbounded fashion then it can be shown that
\be\label{sim}
\begin{split}
\sup_{\tau\leq t} \abs{y(\tau)}\sim \sup_{\tau\leq t}\norm{\omega_1(\tau)} \sim \sup_{\tau\leq t}\norm{\omega_2(\tau)}  \ldots \\ \sim \sup_{\tau\leq t}\norm{\widebar\omega}\sim \sup_{\tau\leq t}\norm{\widebar\zeta}\sim \sup_{\tau\leq t} \abs{u(\tau)} 
\end{split}
\ee
\cite[\S 5.5]{annbook}
\subsubsection*{Step 4} Rewinting \eqref{eu} in terms of $\widebar\omega$ we have that
\be\label{chibound}
e_\chi \triangleq \widebar\theta^T F(s) I \widebar\omega - F(s)\widebar\theta^T \widebar\omega
\ee
and given that $\dot{\widebar\theta}\in\mathcal L_2$ and $F(s)$ is stable the following holds
\be\label{euo}
e_\chi(t) = o\left[\sup_{\tau\leq t} \norm{\widebar\omega(\tau)}\right].
\ee
The above bound follows from the {\em Swapping Lemma} \cite[Lemma 2.11]{annbook}.
From \eqref{tdbar} and the fact that $\dot{\widebar\theta} \in\mathcal L_2$ we have that ${e_a\widebar\zeta} \in\mathcal L_2$. Given that $W^\prime_f(s)$ is asymptotically stable, \cite[Lemma 2.9]{annbook} can be applied and it follows that
\be\label{wfso}
W^\prime_f(s) \left( (e_a\widebar\zeta)^T \widebar\zeta  \right) = o \left[\sup_{\tau\leq t}  \norm{\widebar\zeta(\tau)} \right]
\ee
The plant output can be written in terms of the reference model and model following error as
\ben
\begin{split}
y(t)= &y_m(t) + e_y(t)  \\
=& W^\prime_m(s) r(t) + \left(1+W^\prime_\ell(s) \right) e_y(t).
\end{split}\een
Using \eqref{ea}, ${e_y(t) = e_a - W^\prime_f(s) \left( e_\chi  - e_a\widebar \zeta^T \widebar\zeta\right)}$ and the above equation expands as
\ben
\begin{split}
y(t)
=& W^\prime_m(s) r(t) + \left(1+W^\prime_\ell(s) \right) e_a \\ &- \left(1+W^\prime_\ell(s) \right)W^\prime_f(s) \left( e_\chi  - e_a\widebar \zeta^T \widebar\zeta\right).
\end{split}
\een
Using \eqref{euo} \eqref{wfso} and noting that $1+W^\prime_\ell(s)$ is asymptotically stable \cite[Lemma 2.9]{annbook} can be applied again and
\ben
\begin{split}
y(t) =& W^\prime_m(s) r(t) + \left(1+W^\prime_\ell(s) \right) e_a \\ &+ o\left[\sup_{\tau\leq t} \norm{\widebar\zeta(\tau)}\right]+ o\left[\sup_{\tau\leq t} \norm{\widebar\omega(\tau)}\right].\end{split}\een
Given that $r$ and $e_a$ are piecewise continuous and bounded we finally have that
\be
y(t) = o\left[\sup_{\tau\leq t} \norm{\widebar\omega(\tau)}\right].
\ee
This contradicts \eqref{sim} and therefore all signals are bounded. Furthermore, from \eqref{eanm} it now follows that $\dot e_{an}$ is bounded and given that $e_{an}\in \mathcal L_2$, from Step 1, it follows that $e_{an}$ asymptotically converges to zero and therefore $\lim_{t\to\infty}e_a(t)=0$. From \eqref{euo} it follows that $e_\chi$ asymptotically converges to zero. Therefore, $\lim_{t\to\infty} e_y(t) = 0$. The above analysis differs from the analysis for the ORM output feedback adaptive control do to the fact that one can not a priori assume that $y_m(t)$ is bounded, do to the feedback of $e_y$ into the reference model.
\end{proof}

\subsection{Performance when $k_p$ known}
Just as in the ${n^*=1}$ case, with stability proved a Lyapunov performance function can be studied that uses a minimal representation of the dynamics. That being said, consider the minimal representation of the dynamics in \eqref{eq:ea2}
\be
\dot e_{am} = A_{\ell} e_{am} + b_{am} \left( \widebar\phi^T \widebar \zeta - e_a \widebar\zeta^T \widebar\zeta\right), \ \ \ e_y = c_{am}^T e_{am}
\ee
in observer canonical form so that ${c_{am}^T=[0 \ \ldots \ 0 \ 1]}$
and \ben c_{am}^T(sI-A_{\ell})^{-1} b_{am} \triangleq W^\prime_f(s)\een

Recall the Anderson version of KY Lemma;
\be
A_\ell^T P_p + P_p A_\ell = -gg^T - 2\mu P_p; \quad
P_pb_{am} = c_{am}
\ee
where $\mu$ is defined in \eqref{eq:mu}. The following performance function 
\be
V_p=e_{am}^TP_pe_{am} +\frac{ \widebar\phi^T\widebar\phi}{\gamma}
\ee
has a time derivative
\be\label{to}\dot V_p \leq -2 \mu e_{am}^TP_pe_{am} -2 e_a^2 \widebar\zeta^T \widebar\zeta.\ee 
From \eqref{to} it directly follows that
\be\label{eanorm}
\norml{e_a(t)}{2}^2\leq \frac{1}{2\mu} \left( \frac{\lambda_\text{max}(P_p)}{\lambda_\text{min}(P_p)}\norm{e(0)}^2+\frac{1}{\gamma}\frac{\norm{\widebar\phi(0)}^2}{\lambda_{\text{min}}(P_p)}\right) \ee
and
\be\label{tdot}
\norml{\dot{\widebar\theta}(t)}{2}^2\leq  \frac{1}{2} \left( \gamma^2 \lambda_\text{max}(P_p)\norm{e(0)}^2+\gamma \norm{\widebar\phi(0)}^2 \right).
\ee
Ultimately we would like to compute the $\mathcal L_2$ norm of $e_\chi$ and $e_y$. Given that these norms will depend explicitly on the specific values of the filter and reference model, we perform that analysis in the following example.

\begin{exmp} In this example we consider a relative degree 2 plant. The reference model is chosen as 
\be
W_m(s) = \frac{1}{s^2+ b_1 s + b_2}
\ee
and the filter is chosen as
\be\label{defs}
F(s) = \frac{1}{s+f_1}.
\ee
The reference model gain is expanded as
\be
\ell = \bb -l_1 & -l_2 \eb^T.
\ee
Then
\be
W_e(s)  = \frac{1}{s^2+ (b_1+l_1) s + (b_2+l_2)}\ee
and
\be\label{above11}
W_f(s)  = \frac{s+f_1}{s^2+ (b_1+l_1) s + (b_2+l_2)}.
\ee
Since, ${k_p=k_m=1}$, then ${W_m(s) = W_m^\prime(s)}$, ${W_e(s) = W_e^\prime(s)}$ and ${W_f(s) = W_f^\prime(s)}$. For stability to hold $W^\prime_f(s)$ must be SPR and from \eqref{above11} it is clear that the SPR condition can be satisfied by choosing $\ell$ and $f_1$ appropriately. More importantly though, we see that the slowest eignvalue of $W_f(s)$ can be arbitrarily placed and thus the $\mu$ in \eqref{eq:mu} can be arbitrarily increased. 

\be\label{normchi2}
\norm{e_\chi(t)}_{\mathcal L_2}^2 \leq 3\left(\frac{e_\chi^2(0)}{2 f_1}+ \left( \frac{e^2_\chi(0)}{4 f_1^2}    + \frac{ \norm{\widebar\omega(t)}_\infty^2 }{f_1^3}\right) \norm{\dot{\bar\theta}(t)}_{\mathcal L_2}^2   \right) 
\ee
A detailed proof of this expression is given in Appendix \ref{app1}. Furthermore, we have the following bound for the model following error
\be
\norm{e_y(t)}^2_{\mathcal L_2} \leq 2 \norm{e_a(t)}_{\mathcal L_2}^2 + 2 \norm{e_\zeta(t)}_{\mathcal L_2}^2 
\ee
where
\be\label{echi}
e_\zeta(t) \triangleq W_f(s)e_\chi(t)
\ee
can be bounded as
\be\label{brefer}
 \norm{e_\zeta}^2_{\mathcal L_2}  \leq 3 m^2 \left(\frac{e_\zeta^2(0)}{2 \mu} + \left( \frac{e_\chi(0)^2}{4 \mu f_1} + \frac{ \norm{\widebar\omega(t)}_\infty^2 }{\mu f_1^2} \right )\norm{\dot{\bar\theta}(t)}_{\mathcal L_2}^2\right).
\ee
The bound in \eqref{brefer} is given in Appendix \ref{app2}.

\end{exmp}

\begin{rem}
     Now we compare the norms in \eqref{normchi2} and \eqref{brefer} for an ORM and CRM system and note that increasing both $f_1$ and $\mu$ decreases the two norms. For the ORM system $\ell=0$, therefore $\mu$ is solely a function of $b_1$ and $b_2$ in \eqref{above11}. The coefficients $b_1$ and $b_2$ can not be arbitrarily changed without affecting the matching parameter vector $\bar \theta^*$. In the presence of persistence of excitation, $\bar\theta(t) \rightarrow \bar\theta^*$ and large $\bar\theta^*$ will directly imply a large control input. Furthermore, one can not arbitrarily change the reference model poles, as the reference model is a target behavior for the plant, in which case the control engineer may not want to track a reference system with arbitrarily fast poles. Therefore, given that $b_1$ and $b_2$ are not completely free to choose this also limits the value of $f_1$ as $W_f(s)$ must always be SPR. In the CRM case $b_1$ and $b_2$ can be held fixed and $l_1$, $l_2$ and $f_1$ can be adjusted so that the poles of $W_f(s)$ are arbitrarily fast and $W_f(s)$ is still SPR.Therefore, the added degree of freedom through $\ell$ in the CRM adaptive systems allows more flexibility in decreasing the $\mathcal L_2$ norm of $e_y$.
\end{rem}

\begin{rem}
In the above, we have derived bounds on the $\mathcal L_2$ norm of the tracking error. That the same error has finite $\mathcal L_\infty$ bounds is easily shown using Lyapunov function arguments and the fact that projection algorithms ensure
exponential convergence of the error to a compact set, similar to the analysis in \cite{gib13tranA,gib13acc1,gib13acc2}.
\end{rem}

\subsection{Stability in the case of unknown high frequency gain}

When $k_p$ is unknown but with known sign as in Assumption 2, the control structure must include $k(t)$ into the adaptive vector as well as including $r(t)$ back into the regressor vector. Therefore, the controller take the form of \eqref{u1}, repeated here in for clarity,
\ben u(t) =\theta^T(t)\omega(t).\een  
The reference model is chosen as in \eqref{crm1:ref} where
$W_m(s)$ has the same relative degree as the plant to be controlled and thus the output error is the same as in \eqref{} but repeated for clarity
\ben
e_y(t) = k_p W^\prime_e(s) \phi^T(t) \omega(t)\een
where $W_e(s)$ is of the same relative degree as the plant. A complete filtered regressor vector then is defined as 
\be\label{wzu}
\zeta = F(s) I \omega
\ee
where $I$ is the $2n$ by $2n$ identity matrix, the high frequency gain of $F(s)$ is unity, and $F(s)$ and $\ell$ chosen so that
\be
 W^\prime_f(s)  \triangleq W_e^\prime(s)F(s)^{-1}
\ee
is SPR and $W_f(s) = k_m W_f^\prime(s)$.
In addition to the adaptive parameters in the control law however another adaptive parameter $k_\chi(t)$ is included whose parameter error is defined as
\be
\psi \triangleq k_\chi(t) - k_p
\ee
with an update law shortly to be defined. The error equations for this system then are constructed as
\begin{align}
e_a& \triangleq e_y + W^\prime_f(s) \left(k_\chi e_\chi  - e_a\zeta^T \zeta\right) \label{eau}\\
e_\chi& \triangleq \theta^T \zeta - F(s)\theta^T \omega\label{euu}.
\end{align}
The update law for the adaptive parameters is then chosen as
\begin{align}
\dot \theta(t) =& -\gamma \text{sign} (k_p)  e_a \zeta \label{tbone1} \\
\dot k_\chi(t) = &-\gamma e_a e_\chi \label{tbone2}.
\end{align}
\begin{thm}
Following Assumptions 1, 2 and 3, with $\ell$ chosen such that $W^\prime_f(s)$ is SPR, the plant in \eqref{iop} with the reference model in \eqref{crm1:ref}, controller in \eqref{u1} and update law in \eqref{tbone1}--\eqref{tbone2} are globally stable with the model following error $e_y$ asymptotically converging to zero.
\end{thm}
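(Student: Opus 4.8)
The plan is to reproduce the four-step argument of the preceding (known-$k_p$) theorem, modifying only the error algebra and the Lyapunov function so that the unknown magnitude of $k_p$ and the extra parameter $k_\chi$ are absorbed. First I would rewrite the augmented error \eqref{eau} purely in terms of the parameter errors $\phi=\theta-\theta^*$ and $\psi=k_\chi-k_p$. Since $\theta^*$ is constant it commutes with $F(s)$, so the auxiliary error \eqref{euu} collapses to $e_\chi = \phi^T\zeta - F(s)\phi^T\omega$. Using $\zeta=F(s)I\omega$ from \eqref{wzu} together with $W^\prime_f(s)=W_e^\prime(s)F(s)^{-1}$, one gets the identity $W^\prime_f(s)(k_p e_\chi)=W^\prime_f(s)k_p\phi^T\zeta - e_y$, and substituting $k_\chi=k_p+\psi$ into \eqref{eau} yields
\ben
e_a = W^\prime_f(s)\left[k_p\phi^T\zeta + \psi e_\chi - e_a\zeta^T\zeta\right],
\een
up to an exponentially decaying term that vanishes once the filters generating $\zeta$ in \eqref{wzu} and $e_\chi$ in \eqref{euu} share initial conditions, exactly as in Step 1 of the previous proof.

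Next I would pass to the non--minimal realization $\dot e_{an}=A_e e_{an}+b_{an}(k_p\phi^T\zeta+\psi e_\chi-e_a\zeta^T\zeta)$, $e_a=c_{an}^T e_{an}$, with $c_{an}^T(sI-A_e)^{-1}b_{an}=W^\prime_f(s)$. Because $W^\prime_f(s)$ is SPR there is $P_a=P_a^T>0$ with $A_e^T P_a+P_a A_e=-Q_a$ and $P_a b_{an}=c_{an}$. The key design choice is the candidate
\ben
V = e_{an}^T P_a e_{an} + \frac{\abs{k_p}}{\gamma}\phi^T\phi + \frac{1}{\gamma}\psi^2,
\een
whose derivative along the laws \eqref{tbone1}--\eqref{tbone2} is $\dot V = -e_{an}^T Q_a e_{an} - 2e_a^2\zeta^T\zeta\leq 0$. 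The two indefinite cross terms $2k_p e_a\phi^T\zeta$ and $2\psi e_a e_\chi$ arising from $P_a b_{an}=c_{an}$ are cancelled precisely by the $\text{sign}(k_p)$--weighted law for $\theta$ and the law for $k_\chi$; the $\abs{k_p}/\gamma$ weighting is what lets the unknown magnitude of $k_p$ drop out, leaving only its known sign. This establishes Step 1: $e_{an},\phi,\psi\in\mathcal L_\infty$ and $e_{an},e_a,e_a\zeta,\dot\theta\in\mathcal L_2$.

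Steps 2 and 3 are then inherited essentially verbatim from \cite[\S 5.5]{annbook}: with $\theta$ and $k_\chi$ bounded every signal grows at most exponentially, and under the hypothesis that the signals are unbounded, $y$, $\omega_1$, $\omega_2$, $\omega$, $\zeta$ and $u$ all grow at the same rate. The substance of the new argument is Step 4. By the \emph{Swapping Lemma} \cite[Lemma 2.11]{annbook}, $\dot\theta\in\mathcal L_2$ and stable $F(s)$ give $e_\chi=o[\sup_{\tau\leq t}\norm{\omega(\tau)}]$; since $\psi$ is bounded, $\psi e_\chi$ is of the same small order, and applying \cite[Lemma 2.9]{annbook} (using $e_a\zeta\in\mathcal L_2$ for the $e_a\zeta^T\zeta$ term) shows $W^\prime_f(s)(\psi e_\chi-e_a\zeta^T\zeta)=o[\sup_{\tau\leq t}\norm{\zeta(\tau)}]$. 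Writing $y=W_m(s)r+(1+W_\ell(s))e_y$ and substituting $e_y=e_a-W^\prime_f(s)(k_\chi e_\chi-e_a\zeta^T\zeta)$ from \eqref{eau}, the bounded inputs $r$ and $e_a$ together with these small--order terms force $y=o[\sup_{\tau\leq t}\norm{\omega(\tau)}]$, contradicting Step 3. Hence all signals are bounded; then $\dot e_{an}$ is bounded while $e_{an}\in\mathcal L_2$, so Barbalat gives $e_{an}\to 0$, whence $e_a\to 0$, $e_\chi\to 0$, and finally $e_y\to 0$.

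The main obstacle is Step 4 taken together with the cancellation in $\dot V$: unlike the known-$k_p$ case, the error equation now carries the extra product $\psi e_\chi$, so I must confirm both that this term is annihilated in $\dot V$ by the $k_\chi$ update \eqref{tbone2}, and that it remains a genuine $o[\cdot]$ quantity in the growth--rate contradiction even though $e_\chi$ itself may be unbounded. Care is also needed to ensure that $\text{sign}(k_p)$ in \eqref{tbone1} and the weight $\abs{k_p}/\gamma$ are the only places the unknown gain enters, so that the Lyapunov argument never requires knowledge of $k_p$ beyond its sign.
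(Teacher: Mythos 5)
Your proposal is correct and follows essentially the same four-step argument as the paper: the identical reduction of \eqref{eau} to $e_a = W^\prime_f(s)\left(k_p\phi^T\zeta + \psi e_\chi - e_a\zeta^T\zeta\right)$ (modulo the filter-initial-condition term $\delta$), the same non--minimal SPR/KY-lemma Lyapunov construction with the added $\psi^2/\gamma$ term, the same inheritance of Steps 2--3 from \cite[\S 5.5]{annbook}, and the same Step-4 growth-rate contradiction followed by Barbalat. Incidentally, your weighting $\frac{\abs{k_p}}{\gamma}\phi^T\phi$ is the algebraically correct choice for cancelling the cross term $2k_p e_a\phi^T\zeta$ arising from \eqref{eanmu}, whereas the paper's printed $\frac{\phi^T\phi}{\gamma\abs{k_p}}$ only works when $k_p$ sits at the output rather than the input of the error model, so your version quietly fixes a typo.
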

\begin{proof}
The entire proof would come in 4 parts just as in the proof of Theorem 5. We however only present a detailed proof of step 1 and then briefly present the other 3 steps. 
\subsubsection*{Step 1} The boundedness of $e_a$, $\phi$ and $\psi$ are now addressed. First consider the representation of \eqref{eau} 
\ben\begin{split}
e_a  =&W^\prime_e(s) k_p \phi^T \omega +W^\prime_f(s) \left(  k_\chi e_\chi-  e_a \zeta^T \zeta\right) \\ &+W^\prime_f(s) (k_p e_\chi - k_p e_\chi)
\end{split}\een
where $k_pe_\chi$ has been added and subtracted from.  Expanding $k_p e_\chi$, $W_f^\prime(s)$ and $\phi$ we have
\ben\begin{split}
e_a  = & W^\prime_e(s) k_p (\theta-\theta^*)^T \omega +W^\prime_f(s) \left( \psi e_\chi  -  e_a \zeta^T \zeta\right) \\ &+W^\prime_e(s) k_p F(s)^{-1}\left(  \theta^{T} \zeta - F(s)\theta^{T}\omega\right).
\end{split}\een
Canceling like terms in $\theta^T\omega$, and adding and subtracting the term $W_f^\prime(s)\theta^{*T} \zeta$ the expression reduces to
\be\label{lastea}
e_a  =  W^\prime_f(s) \left( k_p \phi^T\zeta   + \psi e_\chi  -  e_a \zeta^T \zeta\right) +\delta(t)
\ee
where $\delta$ is an exponentially decaying term defined as
\ben
\delta(t)=W_f^\prime(s) k_p \left(\widebar\theta^{*T} \left( F(s)  - F(s)\right) I \widebar\omega\right).
\een
Therefore, if the filter $F(s)$ is chosen to have the same initial conditions when constructing $\zeta$ and $e_\chi$, then $\delta=0$ for all time.  For this reason we ignore the affect of choosing different filter initial conditions. The interested reader can see how one can prove stability in augmented error approaches where $\delta(0)\neq 0$ \cite[pg. 213]{annbook}, with the addition of an extra term in the  Lyapunov function. 
Given that $\theta^*$ is constant and the following holds.
Now consider a non--minimal representation of $e_a$ from \eqref{lastea} as
\be\label{eanmu}\begin{split}
\dot e_{an} &= A_{e} e_{an} + b_{an}\left( k_p \phi^T  \zeta + \psi e_\chi- e_a \zeta^T \zeta\right) \\ e_a & = c^T_{an} e_{an}
\end{split}\ee
where
\be
c_{an}^T(sI-A_e)^{-1} b_{an} \triangleq W^\prime_f(s).
\ee
Given that $W^\prime_f(s)$ is SPR, there exists a ${P_a=P_a^T>0}$ such that
\be
A_e^T P_a+P_a A_{e}= - Q_a \text{ and } P_a b_{an} = c_{an}.
\ee
where ${Q_a=Q_a^T>0}$.

Consider the Lyapunov candidate
\be\label{lyap}
V=e_{an}^T P_{a}e_{an} +  \frac{\phi^T \phi}{\gamma \abs{k_p}} + \frac{\psi^2}{\gamma}
\ee
Differentiating along the system dynamics in \eqref{eanm} and substitution of the tuning law from \eqref{tdbar} results in
\be
\dot V \leq - e_{an}^TQ_a e_{an} - 2  e_a^2 \zeta^T \zeta.
\ee
Therefore, $e_{an},\theta,k_\chi \in \mathcal L_\infty$ and $e_{an},\dot{\theta} \in\mathcal L_2$.
\subsubsection*{Step 2} Given that $\phi$ is bounded, then \eqref{eq:x} can grow at most exponentially. 
\subsubsection*{Step 3} The only difference between the $k_p$ known and unknown case is the addition of $k(t)$ in the feedforward loop and $k_\chi(t)$ in the augmented error. Then, if we assume that signals in the system grow in an unbounded fashion and using the results from \eqref{sim} it immediately follows that
\be\label{sim2}
\begin{split}
\sup_{\tau\leq t} \abs{y(\tau)}\sim \sup_{\tau\leq t}\norm{\omega_1(\tau)} \sim \sup_{\tau\leq t}\norm{\omega_2(\tau)}  \ldots \\ \sim \sup_{\tau\leq t}\norm{\widebar\omega}\sim \sup_{\tau\leq t}\norm{\widebar\zeta}\sim \sup_{\tau\leq t}\norm{\omega} \ldots \\ \sim \sup_{\tau\leq t}\norm{\zeta} \sim \sup_{\tau\leq t} \abs{u(\tau)} 
\end{split}
\ee
where $\widebar\zeta$ and $\widebar\omega$ are defined in \eqref{wz} and \eqref{wbaro} respectively.
\subsubsection*{Step 4} Given that $\dot{\widebar\theta}\in\mathcal L_2$ and $F(s)$ is stable the following holds
\be\label{euou}
e_\chi(t) = o\left[\sup_{\tau\leq t} \norm{\omega(\tau)}\right].
\ee
Then, following the same steps as in Step 4 from the proof of Theorem 5 we can conclude that
\be
y(t) = o\left[\sup_{\tau\leq t} \norm{\omega(\tau)}\right].
\ee
This contradicts \eqref{sim2} and therefore all signals are bounded. Furthermore, from \eqref{eanmu} it now follows that $\dot e_{an}$ is bounded and given that $e_{an}\in \mathcal L_2$, from Step 1, it follows that $e_{an}$ asymptotically converges to zero and therefore $\lim_{t\to\infty}e_a(t)=0$. From \eqref{euou} it follows that $e_\chi$ asymptotically converges to zero. Therefore, $\lim_{t\to\infty} e_y(t) = 0$.
\end{proof}

\section{Conclusion}
This work shows that with the introduction of CRMs the adaptive system can have improved transient performance in terms of reduction of the $\mathcal L_2$ norm of the model following error. Similar to previous work in\cite{gib13tranA}, bounds on derivatives of key signals in the system, and trade--off between transients and learning remain to be addressed and is the subject of on--going investigation.

\section*{Acknowledgment}
This work was supported by the Boeing Strategic University Initiative.

\bibliographystyle{IEEEtran}
\bibliography{ref}

\begin{thebibliography}{10}
\providecommand{\url}[1]{#1}
\csname url@samestyle\endcsname
\providecommand{\newblock}{\relax}
\providecommand{\bibinfo}[2]{#2}
\providecommand{\BIBentrySTDinterwordspacing}{\spaceskip=0pt\relax}
\providecommand{\BIBentryALTinterwordstretchfactor}{4}
\providecommand{\BIBentryALTinterwordspacing}{\spaceskip=\fontdimen2\font plus
\BIBentryALTinterwordstretchfactor\fontdimen3\font minus
  \fontdimen4\font\relax}
\providecommand{\BIBforeignlanguage}[2]{{%
\expandafter\ifx\csname l@#1\endcsname\relax
\typeout{** WARNING: IEEEtran.bst: No hyphenation pattern has been}%
\typeout{** loaded for the language `#1'. Using the pattern for}%
\typeout{** the default language instead.}%
\else
\language=\csname l@#1\endcsname
\fi
#2}}
\providecommand{\BIBdecl}{\relax}
\BIBdecl

\bibitem{lav10aiaa}
E.~Lavretsky, R.~Gadient, and I.~M. Gregory, ``Predictor--based model reference
  adaptive control,'' \emph{AIAA JGCD}, 2010.

\bibitem{gib13tranA}
T.~E. Gibson, A.~M. Annaswamy, and E.~Lavretsky, ``Closed--loop reference model
  adaptive control: Stability, performance and robustness,'' \emph{IEEE Trans.
  Automat. Contr.}, ({\em submitted}) 2012 ArXiv:1201.4897
  \url{http://arxiv.org/abs/1201.4897}.

\bibitem{gib13acc1}
------, ``{Closed--loop Reference Model Adaptive Control, Part I: Transient
  Performance},'' in \emph{American Control Conference (submitted)}, 2013.

\bibitem{gib13acc2}
------, ``{Closed--loop Reference Model Adaptive Control, Part II: Composite
  control and Observer Feedback},'' in \emph{America Control Conference
  (submitted)}, 2013.

\bibitem{annbook}
K.~S. Narendra and A.~M. Annaswamy, \emph{Stable Adaptive Systems}.\hskip 1em
  plus 0.5em minus 0.4em\relax Dover, 2005.

\bibitem{ioabook}
P.~Ioannou and J.~Sun, \emph{Robust Adaptive Control}.\hskip 1em plus 0.5em
  minus 0.4em\relax Prentice Hall, 1996.

\bibitem{ste10}
V.~Stepanyan and K.~Krishnakumar, ``Mrac revisited: guaranteed perforamance
  with reference model modification,'' in \emph{American Control Conference},
  2010.

\bibitem{ste11}
------, ``M--mrac for nonlinear systems with bounded disturbances,'' in
  \emph{Conference on Decision and Control}, 2011.

\bibitem{lav12tac}
E.~Lavretsky, ``Adaptive output feedback design using asymptotic properties of
  lqg/ltr controllers,'' \emph{IEEE Trans. Automat. Contr.}, vol.~57, no.~6,
  2012.

\bibitem{kal:siam63}
R.~E. Kalman, ``Mathematical description of linear dynamical systems,''
  \emph{J.S.I.A.M. Control}, vol.~1, no.~2, 1963.

\bibitem{kru02}
A.~Krupadanam, A.~M. Annaswamy, and R.~Mangoubi, ``A viable multivariable
  adaptive controller with application to autonomous helicopters,'' \emph{AIAA
  Journal of Guidance Control and Dynamics}, 2002.

\end{thebibliography}

\appendices
\section{Norm of $e_\chi(t)$} \label{app1}
In this Appendix we compute the $\mathcal L_2$ norm of $e_\chi(t)$. The expression in \eqref{chibound} is equivalent to studying the equation
\be\label{inter1}
e_\chi(t) = \left[\widebar\theta^T(t) - F(s) \widebar\theta^T(t) F(s)^{-1}\right] F(s) I \widebar\omega(t) 
\ee
Given the definition of $F(s)$ in \eqref{defs} we have that
\be
F(s) \widebar\theta^T(t) F(s)^{-1}= \widebar\theta^T(t) - \frac{1}{s+f_1} \dot{\bar \theta}^T(t). 
\ee
This allows \eqref{inter1} to be rewritten as
\be\label{eq74}
e_\chi(t)=\frac{1}{s+f_1} \dot{\bar \theta}^T(t) \frac{1}{s+f_1} I \widebar\omega(t).
\ee
This is analyzed in 3 parts
\be
\abs{e_\chi(t)} \leq \chi_1(t) + \chi_2(t) + \chi_3(t)
\ee
where 
\begin{align}
 \chi_1(t) =&e_\chi(0)\Phi_f(t,0) \label{c1} \\
 \chi_2(t)=&\int_0^t \norm{\dot{\bar\theta}(\tau)} \Phi_f(t,\tau)  e_\chi(0) \Phi_f(\tau,0) d\tau \label{c2}\\ 
 \chi_3(t) =&\int_0^t \norm{\dot{\bar\theta}(\tau)}  \Phi_f(t,\tau)   \int_0^\tau\Phi_f(\tau,z) \norm{\widebar\omega(z)} dz d\tau\label{c3}
 \end{align} 
and
\be
\Phi_f(t,\tau) =\exp{(-f_1(t-\tau))}.
\ee
Then the $\mathcal L_2$ norm of $e_\chi(t)$ is obtained as
\be
\norm{e_\chi(t)}_{\mathcal L_2}^2 \leq 3 \sum_{i=1}^3\int_0^\infty \chi_i^2(\tau) d\tau. \ee
Squaring and integrating \eqref{c1} we have that
\be
\int_0^\infty\chi_1^2(\tau) d\tau  \leq \frac{e_\chi^2(0)}{2 f_1}. 
\ee
%
%
%
%
Notice that $\Phi_f(t,0)=\Phi_f(t,\tau) \Phi_f(\tau,0)$ is not a function of $\tau$ and therefore can be pulled out of the integral in \eqref{c2} resulting in
\be
\chi_2(t) \leq e_\chi(0) \Phi_f(t,0)  \int_0^t \norm{\dot{\bar\theta}(\tau)} d\tau.
\ee
Using Youngs inequality \ben\int_0^t \norm{\dot{\bar\theta}(\tau)} d\tau \leq \left( \int_0^t 1^2 d\tau \right)^{1/2} \left(\int_0^t \norm{\dot{\bar\theta}(\tau)}^2 d\tau \right)^{1/2}\een and therefore
\be
\chi_2(t) \leq e_\chi(0) \sqrt{t} \Phi_f(t,0) \norm{\dot{\bar\theta}(\tau)}_{\mathcal L_2}.
\ee
Squaring the result above and integrating we have that
\be
\int_0^\infty\chi_2^2(\tau) d\tau \leq \frac{e_\chi(0)^2}{4 f_1^2} \norm{\dot{\bar\theta}(\tau)}_{\mathcal L_2}^2
\ee

Integrating the inner integral in \eqref{c3} we have that
\be
\chi_3(t) \leq \frac{ \norm{\widebar\omega(t)}_\infty }{f_1} \int_0^t \norm{\dot{\bar\theta}(\tau)}  \Phi_f(t,\tau) (1- \Phi_f(\tau,0) ) d\tau.
\ee
Noting that ${[1-\Phi_f(t,0)] \leq 1}$ for all $t$ the above simplifies to
\be\label{abover2}
\chi_3(t) \leq \frac{ \norm{\widebar\omega(t)}_\infty }{f_1} \int_0^t \norm{\dot{\bar\theta}(\tau)}  \Phi_f(t,\tau) d\tau.
\ee
Using Young's Inequality we have that
\be\begin{split}
\int_0^t \norm{\dot{\bar\theta}(\tau)}  \Phi_f(t,\tau) d\tau \leq &\left ( \int_0^t {\Phi_f(t,\tau)} d\tau\right)^{1/2}  \\& \cdot\left ( \int_0^t {\Phi_f(t,\tau)} \norm{\dot{\bar\theta}(\tau)}^2  d\tau\right)^{1/2}
\end{split}\ee
and bounding the first integral term we have that
\be\begin{split}\label{abover}
\int_0^t \norm{\dot{\bar\theta}(\tau)}  \Phi_f(t,\tau) d\tau \leq &\frac{1}{\sqrt{f_1}} \left ( \int_0^t {\Phi_f(t,\tau)} \norm{\dot{\bar\theta}(\tau)}^2  d\tau\right)^{1/2}.
\end{split}\ee
Substitution of \eqref{abover} into \eqref{abover2}, squaring  and integrating we have that
\be
\int_0^\infty\chi_3^2(\tau)d\tau \leq \frac{ \norm{\widebar\omega(t)}_\infty^2 }{f_1^3} \norm{\dot{\bar\theta}(t)}_{\mathcal L_2}^2.
\ee

\section{Norm of $e_a(t)$}\label{app2}
Noting that $\frac{a}{1 + b} \leq a$ for all $a,b\geq0$, $e_y$ in \eqref{ea} can be bounded as
\be
\abs{e_y(t)} \leq \abs{e_a(t)} + \abs{W_f(s)e_\chi(t)}.
\ee
From \eqref{eq74} and the definition of $W_f(s)$ in \eqref{above11} the filtered error state $e_\zeta$ from \eqref{echi} satisfies the following equality
\be
e_\zeta(t) = W_e(s) \dot {\bar \theta}^T(t) \frac{1}{s+f_1} I \widebar\omega(t).
\ee
We will also make use of the fact that there exist an $m\geq1$ such that
\be
\exp{(A_\ell t)} \leq m \exp{(-\mu t)}.
\ee
$e_\zeta$ is analyzed in 3 parts just as we did with $e_\chi$
\be
\abs{e_\zeta(t)} \leq \zeta_1(t) + \zeta_2(t) + \zeta_3(t)
\ee
where 
\begin{align}
 \zeta_1(t) =&e_\zeta(0) m \Phi_\mu(t,0) \label{d1} \\
 \zeta_2(t)=& e_\chi (0) m\int_0^t \norm{\dot{\bar\theta}(\tau)} \Phi_\mu (t,\tau)   \Phi_f(\tau,0) d\tau \label{d2}\\ 
 \zeta_3(t) =& m \int_0^t \norm{\dot{\bar\theta}(\tau)}  \Phi_\mu (t,\tau)   \int_0^\tau\Phi_f(\tau,z) \norm{\widebar\omega(z)} dz d\tau\label{d3}
 \end{align} 
and then the $\mathcal L_2$ norm of $e_\zeta(t)$ is obtained as
\be
\norm{e_\zeta(t)}_{\mathcal L_2}^2 \leq 3 \sum_{i=1}^3\int_0^\infty \zeta_i^2(\tau) d\tau. \ee
Squaring and integrating \eqref{d1} we have that
\be
\int_0^\infty\zeta_1^2(\tau) d\tau  \leq \frac{m^2 e_\zeta^2(0)}{2 \mu}. 
\ee
%
%
%
%
%
Using Young's inequality the integral in \eqref{d2} can be upper bounded by 
$ \left( \int_0^t \Phi^2_\mu(t,\tau)\Phi^2_f(\tau,0) d\tau \right)^{1/2}  \norm{\dot{\bar\theta}(t)}_{\mathcal L_2}$ and after computing the integral in the first term reduces to
$\left(\frac{ \Phi_f(2t,0)-\Phi_\mu(2t,0)}{2(\mu-f_1)}\right)^{1/2}  \norm{\dot{\bar\theta}(t)}_{\mathcal L_2}$. Using this, squaring and integrating \eqref{d2} we have that
\be
\int_0^\infty\zeta_2^2(\tau) d\tau \leq \frac{m^2e_\chi(0)^2}{4 \mu f_1} \norm{\dot{\bar\theta}(\tau)}_{\mathcal L_2}^2
\ee

Integrating the inner integral in \eqref{d3} we have that
\be
\zeta_3(t) \leq \frac{ m \norm{\widebar\omega(t)}_\infty }{f_1} \int_0^t \norm{\dot{\bar\theta}(\tau)}  \Phi_\mu(t,\tau) (1- \Phi_f(\tau,0) ) d\tau.
\ee
Noting that ${[1-\Phi_f(t,0)] \leq 1}$ for all $t$ the above simplifies to
\be\label{abover4}
\zeta_3(t) \leq \frac{m \norm{\widebar\omega(t)}_\infty }{f_1} \int_0^t \norm{\dot{\bar\theta}(\tau)}  \Phi_\mu(t,\tau) d\tau.
\ee
Using Young's Inequality we have that
\be\begin{split}
\int_0^t \norm{\dot{\bar\theta}(\tau)}  \Phi_\mu(t,\tau) d\tau \leq &\left ( \int_0^t {\Phi_\mu(t,\tau)} d\tau\right)^{1/2}  \\& \cdot\left ( \int_0^t {\Phi_\mu(t,\tau)} \norm{\dot{\bar\theta}(\tau)}^2  d\tau\right)^{1/2}
\end{split}\ee
and bounding the first integral term we have that
\be\begin{split}\label{abover3}
\int_0^t \norm{\dot{\bar\theta}(\tau)}  \Phi_f(t,\tau) d\tau \leq &\frac{1}{\sqrt{\mu}} \left ( \int_0^t {\Phi_\mu(t,\tau)} \norm{\dot{\bar\theta}(\tau)}^2  d\tau\right)^{1/2}.
\end{split}\ee
Substitution of \eqref{abover3} into \eqref{abover4}, squaring  and integrating we have that
\be
\int_0^\infty\zeta_3^2(\tau)d\tau \leq \frac{m^2 \norm{\widebar\omega(t)}_\infty^2 }{\mu f_1^2} \norm{\dot{\bar\theta}(t)}_{\mathcal L_2}^2.
\ee

%
\end{document}